\def\@linkcolor{blue}
  \def\@anchorcolor{red}
  \def\@citecolor{red}
  \def\@filecolor{red}
  \def\@urlcolor{red}
  \def\@menucolor{red}
  \def\@pagecolor{red}
  \edef\x{%
    \edef\noexpand\x{%
      \endgroup
      \noexpand\toks@{%
        \catcode 96=\noexpand\the\catcode`\noexpand\`\relax
        \catcode 61=\noexpand\the\catcode`\noexpand\=\relax
      }%
    }%
    \noexpand\x
  }%
\newtheorem{Theorem}{Theorem}
\newtheorem{Lemma}{Lemma}
\newtheorem{Problem}{Problem}
\newtheorem{Remark}{Remark}
\newtheorem{Assumption}{Assumption}
\newtheorem{Definition}{Definition}
\DeclareMathOperator{\R}{\mathbb R}
\DeclareMathOperator*{\argmin}{arg\,min}
\newcommand{\bequ}{\begin{eqnarray}}
\newcommand{\eequ}{\end{eqnarray}}
\newcommand{\bb}{\boldsymbol}
\def\BibTeX{{\rm B\kern-.05em{\sc i\kern-.025em b}\kern-.08em
    T\kern-.1667em\lower.7ex\hbox{E}\kern-.125emX}}
\begin{document}

\title{\LARGE \bf Adaptation for Validation of a Consolidated Control Barrier Function based Control Synthesis}

\author{Mitchell Black$^1$ \and Dimitra Panagou$^2$
\thanks{The authors would like to acknowledge the support of the National Science Foundation award number 1931982.}
\thanks{$^1$Dept. of Aerospace Engineering,  Univ. of Michigan, 1320 Beal Ave, Ann Arbor, MI 48109, USA; \texttt{mblackjr@umich.edu}.}
\thanks{$^2$Dept. of Robotics and Dept. of Aerospace Engineering,  Univ. of Michigan, Ann Arbor, MI 48109, USA; \texttt{dpanagou@umich.edu}.}
}
\maketitle  


\begin{abstract}\label{abstract}
We develop a novel adaptation-based technique for safe control design in the presence of multiple control barrier function (CBF) constraints. Specifically, we introduce an approach for synthesizing any number of candidate CBFs into one consolidated CBF candidate, and propose a parameter adaptation law for the weights of its constituents such that the controllable dynamics of the consolidated CBF are non-vanishing. We then prove that the use of our adaptation law serves to certify the consolidated CBF candidate as valid for a class of nonlinear, control-affine, multi-agent systems, which permits its use in a quadratic program based control law. We highlight the success of our approach in simulation on a multi-robot goal-reaching problem in a crowded warehouse environment, and further demonstrate its efficacy experimentally in the laboratory via AION ground rovers operating amongst other vehicles behaving both aggressively and conservatively.
\end{abstract}


\section{Introduction}\label{sec.intro}
Since the arrival of control barrier functions (CBFs) to the field of safety-critical systems \cite{wielandallgower2007cbf}, much attention has been devoted to the development of their viability for safe control design \cite{ames2017control,Xiao2019HOCBF,cortez2019cbfmechsys}. As a set-theoretic approach founded on the notion of forward invariance, CBFs encode safety in that they ensure that any state beginning in a safe set remains so for all future time. In the context of control design, CBF conditions are often used as constraints in quadratic program (QP) based control laws, either as safety filters \cite{Chen2018Obstacle} or in conjunction with stability constraints (e.g. control Lyapunov functions) \cite{Garg2021Robust}. Their utility has been successfully demonstrated for a variety of safety-critical applications, including mobile robots \cite{Chen2021Guaranteed,Jankovic2021Collision}, unmanned aerial vehicles (UAVs) \cite{Xu2018Safe,Khan2020Cascaded}, and autonomous driving \cite{Black2022ffcbf,Yaghoubi2021RiskBoundedCBF}. But while it is now well-established that CBFs for controlled dynamical systems serve as certificates of safety, the verification of \textit{candidate} CBFs as \textit{valid} is in general a challenging problem.

Though for a single CBF there exist guarantees of validity under certain conditions for systems with either unbounded \cite{ames2017control} or bounded control authority \cite{Breeden2021InputConstraints, Xiao2022Adaptive}, these results do not generally extend to control systems seeking to satisfy multiple candidate CBF constraints. Recent approaches to control design in the presence of multiple CBF constraints have mainly circumvented this challenge by considering only one such constraint at a given time instance, either by assumption \cite{Cortez2022RobustMultiple} or construction in a non-smooth manner \cite{Glotfelter2017Nonsmooth,Huang2020SwitchedCBF}. In contrast, the authors of \cite{Lindemann2019CBFSTL} and \cite{Machida2021ConsensusCBF} each propose smoothly synthesizing one candidate CBF for the joint satisfaction of multiple constraints, but make no attempt to validate their candidate function. The problem of safe control design under a multitude of constraints is especially relevant in practical applications involving autonomous mobile robots, where the main challenge is in the robot completing its nominal objective while satisfying constraints related to collision avoidance with respect to obstacles both static and dynamic. 
\begin{figure}[!t]
    \centering
        \includegraphics[trim=2.5cm 18.75cm 3.5cm 0cm,clip,width=0.75\linewidth]{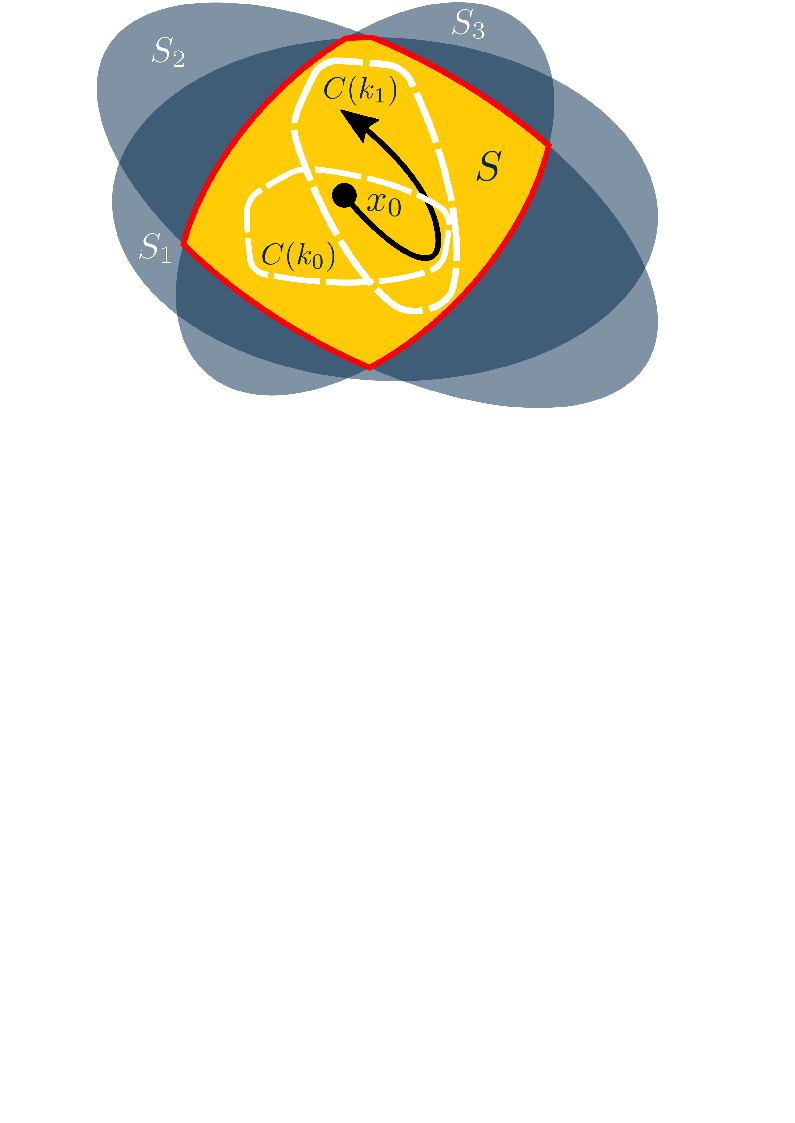}
    \caption{\small{Parameter adaptation for our C-CBF leads to a gain-dependent (and time-varying) controlled-invariant set $C(k) \subset S = \bigcap_{i=1}^c S_i$. $C(k)$ is shown here with a dotted white boundary for gains $k_0$ at time $t_0$ and $k_1$ at $t_1$.}}\label{fig.adaptive_safe_set}
    \vspace{-3mm}
\end{figure}


It is with this problem in mind that we propose a consolidated CBF (C-CBF) based approach to control design for multi-agent systems in the presence of both non-communicative and non-responsive (though non-adversarial) agents. Constructed by smoothly synthesizing any arbitrary number of candidate CBFs into one, our C-CBF defines a new super-level set that can under-approximate the intersection of its constituent sets arbitrarily closely (see Figure \ref{fig.adaptive_safe_set}). We further propose a parameter adaptation law for the weighting of the constituent functions, and prove that its use renders our C-CBF valid and the super-level set controlled invariant for the class of nonlinear, control-affine, multi-agent systems under consideration. And while various works have utilized parameter adaptation in the context of control for safety-critical systems, usually in an attempt to either learn \cite{Black2022Fixed,Lopez2021RaCBF} or compensate for \cite{Taylor2020Adaptive} unknown parameters in the system dynamics, our proposed adaptation law is the first to our knowledge to be used for the simultaneous verified satisfaction of multiple CBF constraints. To show the effectiveness of our proposed control formulation, we study a decentralized multi-robot goal-reaching problem in a crowded warehouse environment amongst non-responsive agents. As a practical demonstration, we tested our controller experimentally on a collection of ground rovers in the laboratory setting and found that it succeeded in safely driving the rovers to their goal locations amongst non-responsive agents behaving both aggressively and conservatively.

The paper is organized as follows. Section \ref{sec.prelims} introduces some preliminaries, including set invariance, optimization based control, and our first problem statement. In Section \ref{sec.consolidated_cbf}, we introduce the form of our C-CBF and propose a parameter adaptation law for rendering it valid. Sections \ref{sec.numerical_case_study} and \ref{sec.experimental_case_study} contain the results of our simulated and experimental case studies respectively, and in Section \ref{sec.conclusion} we conclude with final remarks and directions for future work.


\section{Mathematical Preliminaries}\label{sec.prelims}
We use the following notation throughout the paper. $\mathbb R$ denotes the set of real numbers. The set of integers between $i$ and $j$ (inclusive) is $[i..j]$. $\|\cdot\|$ represents the Euclidean norm.
A function $\alpha: \R \rightarrow \R$ is said to belong to class $\mathcal{K}_\infty$ if $\alpha(0)=0$ and $\alpha$ is increasing on the interval $(-\infty,\infty)$, A function $\phi: \R \times \R \rightarrow \R$ is said to belong to class $\mathcal{L}\mathcal{L}$ if for each fixed $r$ (resp. $s$), the function $\phi(r,s)$ is decreasing with respect to $s$ (resp. $r$) and is such that $\phi (r,s) \rightarrow 0$ for $s \rightarrow \infty$ (resp. $r \rightarrow \infty$). The Lie derivative of a function $V:\mathbb R^n\rightarrow \mathbb R$ along a vector field $f:\mathbb R^n\rightarrow\mathbb R^n$ at a point $x\in \mathbb R^n$ is denoted $L_fV(x) \triangleq \frac{\partial V}{\partial x} f(x)$. 

In this paper we consider a multi-agent system, each of whose $A$ constituent agents may be modelled by the following class of nonlinear, control-affine dynamical systems:
\begin{equation}\label{eq.agent_dynamics}
    \dot{\bb{x}}_i = f_i(\bb{x}_i(t)) + g_i(\bb{x}_i(t))\bb{u}_i(t), \quad \bb{x}_i(0) = \bb{x}_{i0}
\end{equation}
where $\bb{x}_i \in \R^n$ and $\bb{u}_i \in \mathcal{U}_i \subseteq \R^m$ are the state and control input vectors for the i$^{th}$ agent, with $\mathcal{U}_i$ the input constraint set, and where $f_i: \R^n \rightarrow \R^n$ and $g_i: \R^{n \times m} \rightarrow \R^n$ are known, locally Lipschitz, and not necessarily homogeneous $\forall i \in \mathcal{A} = [1..A]$. We denote the concatenated state vector as $\bb{x} = [\bb{x}_1,\hdots,\bb{x}_A]^T \in \R^N$, the concatenated control input vector as $\bb{u} = [\bb{u}_1,\hdots,\bb{u}_A]^T \in \mathcal{U} \subseteq \R^M$, and as such express the full system dynamics as
\begin{equation}\label{eq.multiagent_system}
    \dot{\bb{x}} = F(\bb{x}(t)) + G(\bb{x}(t))\bb{u}(t), \quad \bb{x}(0) = \bb{x}_0,
\end{equation}
where $F = [f_1,\hdots,f_A]^T: \R^N \rightarrow \R^N$ and $G = \textrm{diag}([g_1,\hdots,g_A]): \R^{M \times N} \rightarrow \R^N$. We assume that a (possibly empty) subset of the agents are communicative, denoted $j \in \mathcal{A}_c = [1..A_c]$, in the sense that they share information (e.g. states, control objectives, etc.) with one another, and that the remaining agents are non-communicative, denoted $k \in \mathcal{A}_n = [(A_c+1)..A]$, in that they do not share information, where $A_c \geq 0$ and $A_n = A - A_c \geq 0$ are the number of communicative and non-communicative agents respectively. We further assume that all agents are non-adversarial in that they do not seek to damage or otherwise deceive others, though there may be non-communicative agents which are non-responsive ($l \in \mathcal{A}_{n,n} \subseteq \mathcal{A}_n$) in that they do not actively avoid unsafe situations. 

\subsection{Safety and Forward Invariance}
Consider a set of safe states $S$ defined implicitly by a continuously differentiable function $h: \R^N \rightarrow \R$, as follows:
\begin{equation}\label{eq.safe_set}
    S = \{\bb{x} \in \R^N \; | \; h(\bb{x}) \geq 0\},
\end{equation}
where the boundary and interior of $S$ are denoted as $\partial S = \{\bb{x} \in \R^N \; | \; h(\bb{x}) = 0\}$ and $\textrm{int}(S) = \{\bb{x} \in \R^N \; | \; h(\bb{x}) > 0\}$ respectively. In many works (e.g. \cite{Gurriet2018Towards,Kolathaya2019ISSf}), the set $S$ defined by \eqref{eq.safe_set} is referred to as \textit{safe} if it is \textit{forward-invariant}, i.e. if $\bb{x}(0) \in S \implies \bb{x}(t) \in S$, $\forall t \geq 0$. Nagumo's Theorem provides a necessary and sufficient condition for rendering the set $S$ forward-invariant for the system \eqref{eq.multiagent_system}.
\begin{Lemma}[\hspace{-0.3pt}Nagumo's Theorem\cite{blanchini1999set}]\label{lemma.nagumos_theorem}
Suppose that there exists $\bb{u}(t) \in \mathcal{U}$ such that \eqref{eq.multiagent_system} admits a globally unique solution for each $\bb{x}_0 \in S$. Then, the set $S$ is forward-invariant for the controlled system \eqref{eq.multiagent_system} if and only if
\begin{equation}\label{eq.forward_invariance}
    L_Fh(\bb{x}) + L_Gh(\bb{x})\bb{u} \geq 0, \; \forall \bb{x} \in \partial S.
\end{equation}
\end{Lemma}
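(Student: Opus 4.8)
The plan is to treat the applied control $\bb{u}$ as fixed, so that \eqref{eq.multiagent_system} reduces to the (possibly non-autonomous) ODE $\dot{\bb{x}} = \tilde{F}(\bb{x},t) \triangleq F(\bb{x}) + G(\bb{x})\bb{u}(t)$, and to study the scalar signal $v(t) \triangleq h(\bb{x}(t))$ along the unique solution guaranteed by the hypothesis. Since $h$ is continuously differentiable, $v$ is differentiable with $\dot{v}(t) = L_Fh(\bb{x}(t)) + L_Gh(\bb{x}(t))\bb{u}(t)$, so that \eqref{eq.forward_invariance} is precisely the statement $\dot{v} \geq 0$ whenever $\bb{x} \in \partial S$, i.e. whenever $v = 0$. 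Throughout I would assume the standard regularity that $\frac{\partial h}{\partial \bb{x}} \neq 0$ on $\partial S$, so that $\partial S$ is a genuine hypersurface and the outward normal is well defined.

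For necessity I would argue by contraposition. Suppose \eqref{eq.forward_invariance} fails, so there is a boundary point $\bb{x}^\star \in \partial S$ at which $L_Fh(\bb{x}^\star) + L_Gh(\bb{x}^\star)\bb{u} < 0$ for the applied control. Initializing the solution at $\bb{x}_0 = \bb{x}^\star \in S$ gives $v(0) = 0$ and $\dot{v}(0) < 0$, so $v(t) < 0$ for all sufficiently small $t > 0$; hence $\bb{x}(t) \notin S$ and $S$ is not forward-invariant.

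The sufficiency direction is the crux. The tempting first-exit argument -- let $t^\star = \inf\{t : v(t) < 0\}$, observe $v(t^\star) = 0$ so $\bb{x}(t^\star) \in \partial S$, and seek a contradiction -- does not close, because the non-strict inequality only yields $\dot{v}(t^\star) = 0$: the trajectory may be tangent to $\partial S$ and still leave. This tangency is the main obstacle, and resolving it is where the local Lipschitz property of $F,G$ (the same regularity underwriting the uniqueness hypothesis) must be used. My plan is a distance-function comparison argument: let $d(t) = \operatorname{dist}(\bb{x}(t), S)$ and let $p(t)$ denote the projection of $\bb{x}(t)$ onto the closed set $S$. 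Whenever $\bb{x}(t) \notin S$ we have $p(t) \in \partial S$ and $\bb{x}(t) - p(t)$ parallel to the outward normal $-\frac{\partial h}{\partial \bb{x}}(p(t))^T$; condition \eqref{eq.forward_invariance} evaluated at $p(t)$ therefore gives $\langle \tilde{F}(p(t),t), \bb{x}(t) - p(t)\rangle \leq 0$. Writing
\[
\tfrac{1}{2}\tfrac{d}{dt}d(t)^2 = \langle \tilde{F}(\bb{x}(t),t) - \tilde{F}(p(t),t),\, \bb{x}(t) - p(t)\rangle + \langle \tilde{F}(p(t),t),\, \bb{x}(t) - p(t)\rangle,
\]
the first inner product is bounded by $L\,d(t)^2$ via the Lipschitz constant $L$ of $\tilde{F}$ on the relevant bounded region, while the second is nonpositive, so $\tfrac{d}{dt}d(t)^2 \leq 2L\,d(t)^2$. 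Since $\bb{x}(0) \in S$ gives $d(0) = 0$, Gronwall's inequality forces $d(t) \equiv 0$, i.e. $\bb{x}(t) \in S$ for all $t \geq 0$, establishing forward invariance. The one technical point I would treat carefully is the differentiability of $d^2$ along the solution: the projection $p(t)$ need not be single-valued globally, so I would either work in a tubular neighborhood of the smooth boundary where it is unique, or invoke the almost-everywhere differentiability of the locally Lipschitz map $t \mapsto d(t)^2$ together with Danskin's theorem, either of which suffices for the Gronwall estimate.
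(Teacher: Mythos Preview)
The paper does not actually prove this lemma: it is stated as a classical result with a citation to \cite{blanchini1999set} and used as background, with no proof given. So there is no ``paper's own proof'' to compare against.

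Your proposal is a correct and standard route to Nagumo's theorem. The necessity direction is fine. For sufficiency you correctly identify why the naive first-exit argument does not close, and the distance-function/Gronwall argument you outline is exactly the right resolution: the key inequality $\langle \tilde F(p(t),t),\,\bb{x}(t)-p(t)\rangle \le 0$ follows from \eqref{eq.forward_invariance} together with the fact that $\bb{x}(t)-p(t)$ is a positive multiple of the outward normal $-\frac{\partial h}{\partial \bb{x}}(p(t))^T$, and the Lipschitz bound handles the remaining term. Your caveat about the differentiability of $t\mapsto d(t)^2$ is well placed; working in a tubular neighborhood of the smooth boundary (where the projection is unique and smooth) is the cleanest way to make the estimate rigorous, and is consistent with the regularity assumption $\frac{\partial h}{\partial \bb{x}}\neq 0$ on $\partial S$ that you already imposed. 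One small point worth tightening: in the necessity argument, if $\bb{u}$ is genuinely time-varying then the violation of \eqref{eq.forward_invariance} at $\bb{x}^\star$ occurs for some value $\bb{u}(t_0)$, so you should initialize at $(\bb{x}^\star,t_0)$ rather than at $t=0$; the same conclusion follows.
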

\noindent One way to render a set $S$ forward-invariant is to use CBFs in the control design.
\begin{Definition}\cite[Definition 5]{ames2017control}\label{def.cbf}
    Given a set $S \subset \R^N$ defined by \eqref{eq.safe_set} for a continuously differentiable function $h: \R^N \rightarrow \R$, the function $h$ is a \textbf{control barrier function} (CBF) defined on a set $D \supseteq S$ if there exists a Lipschitz continuous class $\mathcal{K}_\infty$ function $\alpha: \R \rightarrow \R$ such that, for all $\bb{x} \in D$,
    \begin{equation}\label{eq.cbf_condition}
        \sup_{\bb{u} \in \mathcal{U}}\left[L_Fh(\bb{x}) + L_Gh(\bb{x})\bb{u}\right] \geq -\alpha(h(\bb{x})).
    \end{equation}
\end{Definition}
In this paper, we assume that $\frac{\partial h}{\partial \bb{x}}$ is Lipschitz continuous so that $L_Fh(\bb{x})$ and $L_Gh(\bb{x})$ are likewise. In other works (e.g. \cite{Jankovic2022Multi}), the function $h$ responsible for defining $S$ is a CBF if there exists a class $\mathcal{K}_\infty$ function $\alpha$ satisfying
\begin{equation}\label{eq.alternate_cbf_condition}
    L_Gh(\bb{x}) = \mathbf{0}_{1\times M} \implies L_Fh(\bb{x}) + \alpha(h(\bb{x})) > 0.
\end{equation}
We note, however, that with unbounded control authority (i.e. $\mathcal{U} = \R^{M}$) a sufficient condition for the existence of some $\alpha \in \mathcal{K}_\infty$ satisfying \eqref{eq.cbf_condition}, and thus for $h$ to be a CBF, is $L_Gh(\bb{x}) \neq \mathbf{0}_{1\times M}$, $\forall \bb{x} \in S$, though this does not generally hold for a system with multiple CBF constraints.

\subsection{Control Design using CBFs}
Decentralized controllers, in which agents compute inputs based on local information, have found empirical success as a control strategy for multi-agent systems of the form \eqref{eq.multiagent_system} \cite{Borrmann2015Certificates,wang2017safety}.
The following is an example of one such controller for an agent $i\in\mathcal{A}$ with safety constraints encoded via $c>1$ candidate CBFs:
\begin{subequations}\label{eq.decentralized_cbf_qp_controller}
\begin{align}
    \bb{u}_i^* = \argmin_{\bb{u}_i \in \mathcal{U}_i} &\frac{1}{2}\|\bb{u}_i-\bb{u}_i^0\|^2 \label{subeq.decentralized_cbf_qp_objective}\\
    \textrm{s.t.} \quad &\forall s\in[1..c] \nonumber \\
    a_{s,i} + \bb{b}_{s,i}\bb{u}_{i} &\geq 0, \label{subeq.decentralized_cbf_qp_constraints}
\end{align}
\end{subequations}
where \eqref{subeq.decentralized_cbf_qp_objective} seeks to produce a control solution $\bb{u}_i^*$ that deviates minimally from some nominal input $\bb{u}_i^0$, and \eqref{subeq.decentralized_cbf_qp_constraints} encodes $c$ safety constraints of the form \eqref{eq.cbf_condition} via candidate CBFs $h_s$, where $a_{s,i} = L_{f_i}h_s + \alpha_s(h_s)$ and $\bb{b}_{s,i} = L_{g_i}h_s$.
Notably, for many classes of systems \eqref{eq.decentralized_cbf_qp_controller} is neither guaranteed to be feasible nor to preserve safety between agents \cite{Jankovic2021Collision}. When some subset of agents are able to communicate with one another, i.e. agents $j \in \mathcal{A}_c$ share information, their control inputs $\bb{u}_{\mathcal{A}_c} = [\bb{u}_1,\hdots,\bb{u}_{A_c}]^T$ may be computed in a centralized fashion as follows:
\begin{subequations}\label{eq.centralized_cbf_qp_controller}
\begin{align}
    \bb{u}_{\mathcal{A}_c}^* = \argmin_{\bb{u}_{\mathcal{A}_c} \in \mathcal{U}_{\mathcal{A}_c}} &\frac{1}{2}\|\bb{u}_{\mathcal{A}_c}-\bb{u}_{\mathcal{A}_c}^0\|^2 \label{subeq.centralized_cbf_qp_objective}\\
    \textrm{s.t.} \quad \forall j,k&\in\mathcal{A}_c, \; k \neq j \nonumber \\
    a_{s,j} + \bb{b}_{s,j}\bb{u}_{j} &\geq 0, \; \forall s \in [1..c_I], \label{subeq.centralized_cbf_qp_agentj} \\
    a_{s,jk} + \bb{b}_{s,j}\bb{u}_{j} + \bb{b}_{s,k}\bb{u}_{k} &\geq 0, \; \forall s \in [c_I + 1..c] \label{subeq.centralized_cbf_qp_interagent}
\end{align}
\end{subequations}
where $\bb{u}_{\mathcal{A}_c}^0 = [\bb{u}_1^0,\hdots,\bb{u}_{A_c}^0]^T$ is the nominal input vector shared amongst communicative agents, $\mathcal{U}_{\mathcal{A}_c} = \bigoplus_{j=1}^{A_c}\mathcal{U}_j$ is the Minkowski sum of their input constraint sets, \eqref{subeq.centralized_cbf_qp_agentj} denotes the $c_I \geq 0$ individual CBF constraints for agent $j$ (e.g. speed), and \eqref{subeq.centralized_cbf_qp_interagent} represents combinations of safety constraints between agents (e.g. collision avoidance), where $a_{s,jk} = L_{f_j}h_s + L_{f_k}h_s + \alpha_s(h_s)$, $\bb{b}_{s,j} = L_{g_j}h_s$, and $\bb{b}_{s,k} = L_{g_k}h_s$. When all agents are communicative, \eqref{eq.centralized_cbf_qp_controller} is guaranteed to be safe provided that it is feasible. 


A challenge when it comes to both \eqref{eq.decentralized_cbf_qp_controller} and \eqref{eq.centralized_cbf_qp_controller} is in satisfying all of the safety constraints simultaneously, especially when it comes to the design of $\alpha_s$.
In some recent works, authors have proposed setting $\alpha_s(h_s)=p_sh_s$ and including the parameters $p_s$ as decision variables in the QP \cite{Parwana2022Trust} (and thus an additional term $\sum_{s=1}^c\frac{1}{2}q_sp_s^2$ for $q_s>0$ in the objective function), but the performance of these approaches are still heavily dependent on the gains $q_s$. Other techniques have avoided the issue of multiple candidate CBFs by assuming that only one constraint is in need of satisfaction at once \cite{Cortez2022RobustMultiple} or by synthesizing a single non-smooth candidate CBF \cite{Glotfelter2017Nonsmooth, Huang2020SwitchedCBF}, both of which may lead to undesirable chattering behavior or the loss of existence and uniqueness of solutions. We seek to address this open problem, and require the following assumption to do so.
\begin{Assumption}\label{ass.nonempty_safe_set_intersection}
    The intersection of the safe sets $S_s$ for all $s \in [1..c]$ is non-empty, i.e. $S = \bigcap_{s=1}^cS_s \neq \emptyset$.
\end{Assumption}
\begin{Problem}\label{prob.consolidated_cbf}
    Given that Assumption \ref{ass.nonempty_safe_set_intersection} holds for a collection of $c>1$ candidate control barrier functions $h_s$ corresponding to safe sets $S_s$, design a consolidated control barrier function candidate $H: \R^N \times \R_+^c \rightarrow \R$ with constituent gains $\bb{k} = [k_1,\hdots,k_c]^T \in \R_+^c$ for the zero super-level set $C(\bb{k}) = \{\bb{x} \in \R^N \; | \; H(\bb{x}, \bb{k}) \geq 0\}$ such that $C(\bb{k}) \subseteq S$ for all $\bb{k}$ satisfying $0 < k_s < \infty$, $\forall s \in [1..c]$.
\end{Problem}

\section{Consolidated CBF based Control}\label{sec.consolidated_cbf}
In this section, we first introduce our proposed solution to Problem \ref{prob.consolidated_cbf}, a consolidated control barrier function (C-CBF) candidate that smoothly synthesizes multiple candidate CBFs into one, and then design a parameter adaptation law which renders the candidate C-CBF valid for safe control design.

\subsection{Consolidated CBFs}
Let the vector of $c>1$ candidate CBFs evaluated at a given state $\bb{x}$ be denoted $\bb{h}(\bb{x}) =  [h_1(\bb{x}) \; \hdots \; h_c(\bb{x})]^T \in \R^c$, and define a gain vector as $\bb{k} = [k_1 \; \hdots \; k_c]^T \in \R^c$, where $0< k_s < \infty$ for all $s \in [1..c]$. Our C-CBF candidate $H: \R^N \times \R_+^c \rightarrow \R$ is the following:
\begin{equation}\label{eq.consolidated_cbf}
    H(\bb{x}, \bb{k}) = 1 - \sum_{s=1}^c\phi\Big(h_s(\bb{x}), k_s\Big),
\end{equation}
where $\phi: \R_{\geq 0} \times \R_{\geq 0} \rightarrow \R_+$ belongs to class $\mathcal{L}\mathcal{L}$, is continuously differentiable, and satisfies $\phi(h_s,0)=\phi(0,k_s)=\phi(0,0)=1$. For example, the decaying exponential function, i.e. $\phi(h_s,k_s)=e^{-h_sk_s}$, satisfies these requirements over the domain $\R_{\geq 0} \times \R_{\geq 0}$. With $\phi$ possessing these properties, it follows then that the new zero super level-set $C(\bb{k}) = \{\bb{x} \in \R^N \; | \; H(\bb{x}, \bb{k}) \geq 0\}$ is a subset of $S$ (i.e. $C(\bb{k} \subset S$), where the level of closeness of $C(\bb{k})$ to $S$ depends on the choices of gains $\bb{k}$. This may be confirmed by observing that if any $h_s(\bb{x}) = 0$ then $H(\bb{x}) \leq 1 - 1 - \sum_{j=1, j\neq s}^c \phi(h_j(\bb{x}), k_j) < 0$, and thus for $H(\bb{x}) \geq 0$ it must hold that $h_s(\bb{x}) > 0$, for all $s \in [1..c]$.

As such, $H$ defined by \eqref{eq.consolidated_cbf} is a solution to Problem \ref{prob.consolidated_cbf}, i.e. $H$ is a C-CBF candidate. This implies via Lemma \ref{lemma.nagumos_theorem} that if $H$ is \textit{valid} over the set $C(\bb{k})$, then $C(\bb{k})$ is controlled invariant and thus the trajectories of \eqref{eq.multiagent_system} remain safe with respect to each constituent safe set $S_s$, $\forall s \in [1..c]$. By Definition \ref{def.cbf}, for a static gain vector (i.e. $\dot{\bb{k}} = \mathbf{0}_{c \times 1}$) the function $H$ is a CBF on the set $S$ if there exists $\alpha_H \in \mathcal{K}_\infty$ such that the following condition holds for all $\bb{x} \in S \supset C(\bb{k})$:
\begin{equation}\label{eq.ccbf_condition_static_k}
    L_FH(\bb{x}, \bb{k}) + L_GH(\bb{x}, \bb{k})\bb{u} \geq -\alpha_H(H(\bb{x}, \bb{k})),
\end{equation}
where from \eqref{eq.consolidated_cbf} it follows that
\begin{align}
    L_FH(\bb{x}) &= -\sum_{s=1}^c\frac{\partial \phi}{\partial h_s}L_Fh_c(\bb{x}), \label{eq.LfH_static_k} \\
    L_GH(\bb{x}) &= -\sum_{s=1}^c\frac{\partial \phi}{\partial h_s}L_Gh_c(\bb{x}) \label{eq.LgH_static_k}.
\end{align}
Again taking $\phi(h_s, k_s) = e^{-h_sk_s}$ as an example, we obtain that $\frac{\partial \phi}{\partial h_s} = -k_se^{-h_sk_s}$, in which case it is evident that the role of the gain vector $\bb{k}$ is to weight the constituent candidate CBFs $h_s$ and their derivative terms 
$L_Fh_s$ and $L_Gh_s$ in the CBF condition \eqref{eq.ccbf_condition_static_k}. Thus, a higher value $k_s$ indicates a weaker weight in the CBF dynamics, as the exponential decay overpowers the linear growth. Due to the combinatorial nature of these gains, for an arbitrary $\bb{k}$ there may exist some $\bb{x} \in C(\bb{k})$ such that $L_GH(\bb{x}) = \mathbf{0}_{1\times M}$, which may violate \eqref{eq.alternate_cbf_condition} and lead to the state exiting $C(\bb{k})$ (and potentially $S$ as a result). 
Using online adaptation of $\bb{k}$, however, it may be possible to achieve $L_GH(\bb{x}) \neq \mathbf{0}_{1\times M}$ for all $t \geq 0$, which motivates the following problem.
\begin{Problem}\label{prob.adaptation}
    Given a C-CBF candidate $H: \R^N \times \R_+^c \rightarrow \R$ defined by \eqref{eq.consolidated_cbf} and associated with the set $C(\bb{k})$, design an adaptation law $\dot{\bb{k}} = \kappa(\bb{x}, \bb{k})$ such that $L_GH \neq \mathbf{0}_{1\times M}$ for all $t \geq 0$.
\end{Problem}

\subsection{Adaptation for Control Synthesis}
Before proceeding with our main result, we require the following assumption.
\begin{Assumption}\label{ass.LgH_nonzero}
    The matrix of controlled candidate CBF dynamics $\bb{L}_g \in \R^{c \times M}$ is not all zero, i.e.
    \begin{equation}\label{eq.G_matrix}
        \bb{L}_g = \begin{bmatrix} L_gh_1 \\ \vdots \\ L_gh_c \end{bmatrix} \neq \mathbf{0}_{c \times M}. 
    \end{equation}
\end{Assumption}
We now present our main result, an adaptation law that solves Problem \ref{prob.adaptation} and thus renders $H$ a valid CBF for the set $C(\bb{k}(t))$, for all $t \geq 0$.
\begin{Theorem}\label{thm.kdot_HCBF}
    Suppose that there exist $c>1$ candidate CBFs $h_s: \R^N \rightarrow \R$ defining sets $S_s = \{\bb{x} \in \R^N \; | \; h_s(\bb{x}) \geq 0\}$, $\forall s \in [1..c]$, and that it is known that $\mathcal{U}=\R^M$. If $\bb{k}(0)$ is such that $L_GH \neq \mathbf{0}_{1 \times M}$ at $t=0$, then, under the ensuing adaptation law,
    \begin{subequations}\label{subeq.k_dot}
        \begin{align}
            \kappa(\bb{x}, \bb{k}) = \argmin_{\bb{\mu} \in \R^c} \; \frac{1}{2}(\bb{\mu} - \bb{\mu}_0)^T\bb{P}&(\bb{\mu} - \bb{\mu}_0) \\
            \mathrm{s.t.} \quad \quad \quad &\nonumber \\
            \bb{\mu} + \alpha_k(\bb{k}-\bb{k}_{min}) &\geq 0, \label{subeq.kdot.k_gr0} \\
            \bb{p}^T\bb{Q}\dot{\bb{p}} + \bb{p}^T\dot{\bb{Q}}\bb{p} + \alpha_{p}(h_{p}) &\geq 0, \label{subeq.kdot.vperp}
        \end{align}
    \end{subequations}
    the controlled CBF dynamics $L_GH \neq \mathbf{0}_{1 \times M}$ for all $t \geq 0$, and thus the function $H$ defined by \eqref{eq.consolidated_cbf} is a valid CBF for the set $C(\bb{k}(t)) = \{\bb{x} \in \R^N \; | \; H(\bb{x}, \bb{k}) \geq 0\}$, for all $t \geq 0$, where $\bb{P} \in \R^{c \times c}$ is a positive-definite gain matrix, $\alpha_k, \alpha_p \in \mathcal{K}_\infty$, $\bb{\mu}_0$ is the nominal $\dot{\bb{k}}$, $\bb{k}_{min} = [k_{1,min},\hdots,k_{c,min}]^T$ is the vector of minimum allowable values $k_{s,min} > 0$, and
    \begin{align}
        \bb{p} &= \left[\frac{\partial \phi}{\partial h_1} \; \hdots \; \frac{\partial \phi}{\partial h_c}\right]^T, \label{eq.p_vector} \\
        \bb{Q} &= \bb{I} - (\bb{N}\bb{N}^T)^T - \bb{N}\bb{N}^T - (\bb{N}\bb{N}^T)^T\bb{N}\bb{N}^T \label{eq.Q_matrix}
    \end{align}
    with $h_p = \frac{1}{2}\bb{p}^T\bb{Q}\bb{p} - \varepsilon$, $\varepsilon > 0$, and
    \begin{equation}\label{eq.N_basis_vectors}
        \bb{N} = [\bb{n}_1 \; \hdots \; \bb{n}_r],
    \end{equation}
    such that $\{\bb{n}_1,\hdots,\bb{n}_r\}$ constitutes a basis for the null space of $\bb{L}_g^T$, i.e. $\mathcal{N}(\bb{L}_g^T) = \mathrm{span}\{\bb{n}_1,\hdots,\bb{n}_r\}$, where $\bb{L}_g$ is given by \eqref{eq.G_matrix}.
\end{Theorem}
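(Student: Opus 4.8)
The plan is to reduce the claim of validity to the single requirement $L_GH \neq \mathbf{0}_{1\times M}$, and then to show that the adaptation law \eqref{subeq.k_dot} keeps this quantity nonzero by treating $h_p$ as a barrier function for the gain dynamics $\dot{\bb{k}} = \bb{\mu}$. First I would record, from \eqref{eq.LgH_static_k}, that $L_GH = -\bb{p}^T\bb{L}_g$, so that $L_GH = \mathbf{0}_{1\times M}$ if and only if $\bb{L}_g^T\bb{p} = \mathbf{0}$, i.e. if and only if $\bb{p} \in \mathcal{N}(\bb{L}_g^T) = \myspan\{\bb{n}_1,\dots,\bb{n}_r\}$. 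Since $\mathcal{U} = \R^M$, the remark following \eqref{eq.alternate_cbf_condition} shows that $L_GH \neq \mathbf{0}_{1\times M}$ is sufficient for $H$ to satisfy the CBF condition: the supremum in \eqref{eq.ccbf_condition_static_k} (augmented, for time-varying $\bb{k}$, by the term $\frac{\partial H}{\partial \bb{k}}\dot{\bb{k}}$) is then unbounded above, so some $\alpha_H \in \mathcal{K}_\infty$ exists. Thus the theorem collapses to proving that $L_GH$ never vanishes along trajectories.

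Next I would interpret $\bb{Q}$ in \eqref{eq.Q_matrix} as a symmetrized version of the orthogonal projector onto $\mathcal{N}(\bb{L}_g^T)^\perp = \mathcal{R}(\bb{L}_g)$, so that $\bb{p}^T\bb{Q}\bb{p}$ measures the squared norm of the component of $\bb{p}$ lying in the range of $\bb{L}_g$. With this reading, $\bb{p}^T\bb{Q}\bb{p} > 0$ holds exactly when $\bb{p} \notin \mathcal{N}(\bb{L}_g^T)$, i.e. exactly when $L_GH \neq \mathbf{0}_{1\times M}$. Consequently the single inequality $h_p = \tfrac{1}{2}\bb{p}^T\bb{Q}\bb{p} - \varepsilon \geq 0$, for the fixed margin $\varepsilon > 0$, implies $\bb{p}^T\bb{Q}\bb{p} \geq 2\varepsilon > 0$ and hence $L_GH \neq \mathbf{0}_{1\times M}$. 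The goal is therefore reduced once more, to showing that the adaptation law renders the set $\{h_p \geq 0\}$ forward invariant in time.

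To obtain that invariance I would treat $h_p$ as a CBF for the gain dynamics and combine Nagumo's Theorem (Lemma \ref{lemma.nagumos_theorem}) with the comparison lemma. Differentiating $h_p = \tfrac{1}{2}\bb{p}^T\bb{Q}\bb{p} - \varepsilon$ along trajectories produces exactly the terms $\bb{p}^T\bb{Q}\dot{\bb{p}}$ and $\bb{p}^T\dot{\bb{Q}}\bb{p}$ appearing on the left of \eqref{subeq.kdot.vperp}, so that constraint is a rearrangement of the CBF inequality $\dot{h}_p + \alpha_p(h_p) \geq 0$. Here $\dot{\bb{p}}$ splits into a drift part driven by $\dot{\bb{x}} = F + G\bb{u}$ through the second partials $\frac{\partial^2\phi}{\partial h_s^2}$ and a control part $\mathrm{diag}\!\big(\frac{\partial^2\phi}{\partial h_s\,\partial k_s}\big)\bb{\mu}$ that is linear in the decision variable $\bb{\mu} = \dot{\bb{k}}$, while $\dot{\bb{Q}}$ enters only through $\dot{\bb{x}}$ and is therefore a drift term. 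Any feasible $\bb{\mu}$ thus yields $\dot{h}_p \geq -\alpha_p(h_p)$, and the comparison lemma gives $h_p(t) \geq 0$ for all $t$ provided $h_p(0) \geq 0$. I would secure the latter from the hypothesis that $L_GH \neq \mathbf{0}_{1\times M}$ at $t = 0$, which forces $\bb{p}(0)^T\bb{Q}(0)\bb{p}(0) > 0$, by fixing the margin $\varepsilon \leq \tfrac{1}{2}\bb{p}(0)^T\bb{Q}(0)\bb{p}(0)$; the companion constraint \eqref{subeq.kdot.k_gr0} keeps each $k_s \geq k_{s,min} > 0$ by the same reasoning, so the gains remain in the admissible range $0 < k_s < \infty$.

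The \emph{main obstacle} is establishing that the QP \eqref{subeq.k_dot} is feasible at every instant, since the comparison argument presupposes a well-defined $\kappa(\bb{x},\bb{k})$. The crucial observation is that on the boundary $\{h_p = 0\}$ one has $\bb{p}^T\bb{Q}\bb{p} = 2\varepsilon > 0$, whence $\bb{Q}\bb{p} \neq \mathbf{0}$; because the mixed partials $\frac{\partial^2\phi}{\partial h_s\,\partial k_s}$ do not all vanish, the coefficient of $\bb{\mu}$ in \eqref{subeq.kdot.vperp} — essentially $\mathrm{diag}\!\big(\frac{\partial^2\phi}{\partial h_s\,\partial k_s}\big)\bb{Q}\bb{p}$ — is nonzero there, so \eqref{subeq.kdot.vperp} defines a genuine half-space in the unbounded variable $\bb{\mu} \in \R^c$ and can always be met by pushing $\bb{\mu}$ far enough in the controllable direction, dominating any finite drift (including its $\bb{u}$-dependent part). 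Feasibility jointly with the lower-bound constraint \eqref{subeq.kdot.k_gr0} follows because those bounds are inactive whenever $k_s > k_{s,min}$ and restrict $\bb{\mu}$ only from below; the delicate corner case, which I would treat separately, is when some gains sit at their bounds $k_{s,min}$ while the controllable direction demands decreasing them. A secondary technical point I would address is the regularity of $\bb{N}$ and hence of $\bb{Q}$: these vary smoothly only where $\rank(\bb{L}_g)$ is locally constant (nonzero by Assumption \ref{ass.LgH_nonzero}), so I would assume constant rank, or restrict to the regime where $h_p$ remains continuously differentiable, to license the comparison lemma.
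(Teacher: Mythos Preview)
Your proposal is correct and follows essentially the same route as the paper: reduce validity of $H$ (under $\mathcal{U}=\R^M$) to $L_GH\neq\mathbf{0}$, rewrite this as $\bb{p}\notin\mathcal{N}(\bb{L}_g^T)$ via the projector encoded in $\bb{Q}$, and then treat $h_p=\tfrac{1}{2}\bb{p}^T\bb{Q}\bb{p}-\varepsilon$ as a barrier whose CBF inequality is exactly \eqref{subeq.kdot.vperp}. Your discussion of QP feasibility, the $\bb{u}$-dependence hidden in $\dot{\bb{p}}$, and the smoothness of $\bb{Q}$ goes beyond what the paper's proof actually argues (the paper relegates the regularity issue to a post-proof remark and does not address feasibility at all), so in those respects you are more careful, not different.
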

\begin{proof}
    
    
    First, given \eqref{eq.consolidated_cbf}, we have that
    \begin{align}
        \dot{H} &= -\sum_{s=1}^c\left(\frac{\partial \phi}{\partial h_s}\dot{h}_s + \frac{\partial \phi}{\partial k_s}\dot{k}_c\right) \nonumber \\
        &= \bb{p}^T \dot{\bb{h}} + \bb{q}^T \dot{\bb{k}} \nonumber \\
        &= \bb{p}^T (\bb{L}_f + \bb{L}_g\bb{u}) + \bb{q}^T \dot{\bb{k}} \nonumber
    \end{align}
    where $\bb{p}$ is given by \eqref{eq.p_vector}, 
    $\bb{L}_g$ by \eqref{eq.G_matrix}, $\bb{L}_f = [L_Fh_1 \; \hdots \; L_Fh_c]^T$, and $\bb{q} = [\frac{\partial \phi}{\partial k_1} \; \hdots \; \frac{\partial \phi}{\partial k_c}]^T$.
    As such, $L_FH = \bb{p}^T\bb{L}_f + \bb{q}^T\dot{\bb{k}}$ and $L_GH = \bb{p}^T\bb{L}_g$. With $\mathcal{U} = \R^M$, it follows that as long as $L_GH \neq \mathbf{0}_{1 \times M}$ it is possible to choose $\bb{u}$ such that $\dot{H}(\bb{x},\bb{u}) \geq -\alpha_H(H)$. We will now show that with $\dot{\bb{k}} = \kappa(\bb{x}, \bb{k})$ given by \eqref{subeq.k_dot} it holds that $L_GH \neq \mathbf{0}_{1 \times M}$ and thus $H$ is a CBF for $C(\bb{k}(t))$, for all $t \geq 0$.
    
    Since $L_GH = \bb{p}^T\bb{L}_g$, the problem of showing that $L_GH \neq \mathbf{0}_{1\times M}$ is equivalent to proving that $\bb{p} \notin \mathcal{N}(\bb{L}_g^T) = \mathrm{span}\{\bb{n}_1,\hdots,\bb{n}_r\}$. Since the vector $\bb{p}$ can be expressed as a sum of vectors perpendicular to and parallel to $\mathcal{N}(\bb{L}_g^T)$ (respectively $\bb{p}^\perp$ and $\bb{p}^\parallel$), it follows that $\bb{p} \notin \mathcal{N}(\bb{L}_g^T)$ as long as $\|\bb{p}^\perp\|>0$, where $\bb{p}^\perp = \left(\bb{I} - \bb{N}\bb{N}^T\right)\bb{p}$
    by vector projection, and $\bb{N}$ is given by \eqref{eq.N_basis_vectors}. Thus, a sufficient condition for $\bb{p} \notin \mathcal{N}(\bb{L}_g^T)$ is that
    \begin{equation}\label{eq.v_condition}
        \frac{1}{2}\|(\bb{I}-\bb{N}\bb{N}^T)\bb{p}\|^2 = \frac{1}{2}\bb{p}^T\bb{Q}\bb{p} > \varepsilon
    \end{equation}
    for some $\varepsilon>0$, where $\bb{Q}$ is given by \eqref{eq.Q_matrix}. Then, by defining a function $h_p = \frac{1}{2}\bb{p}^T\bb{Q}\bb{p} - \varepsilon$, it follows from \eqref{eq.cbf_condition} that when \eqref{eq.v_condition} is true at $t=0$, it is true $\forall t \geq 0$ as long as \eqref{subeq.kdot.vperp} holds. 
    
    Therefore, gains $\bb{k}$ adapted according to the law \eqref{subeq.k_dot} are guaranteed to result in $L_GH \neq \mathbf{0}_{1 \times M}$. Thus, $H$ is a CBF for the set $C(\bb{k}(t))$, for all $t \geq 0$. This completes the proof.
\end{proof}
\begin{Remark}
    With $\bb{Q}$ depending on basis vectors spanning $\mathcal{N}(\bb{L}_g^T)$, it is not immediately obvious under what conditions $\dot{\bb{Q}}$ is continuous (or even well-defined). Prior results show that if the rank of $\mathcal{N}(\bb{L}_g^T)$ is constant then $\dot{\bb{Q}}$ varies continuously $\forall \bb{x} \in B_{\epsilon}(\bb{x})$ \cite{Coleman1984Orthonormal}, but analytical derivations of $\dot{\bb{Q}}$ are not available to the best of our knowledge. In practice, we observe that the rank of $\mathcal{N}(\bb{L}_g^T)$ is indeed constant, and we approximate $\dot{\bb{Q}}$ numerically using finite-difference methods. 
\end{Remark}

With $H$ consolidating the many constituent constraints into one CBF condition, we can then replace the centralized CBF-QP controller \eqref{eq.centralized_cbf_qp_controller} with the following:
\begin{subequations}\label{eq.proposed_centralized}
\begin{align}
    \bb{u}_{\mathcal{A}_c}^* &= \argmin_{\bb{u}_{\mathcal{A}_c} \in \mathcal{U}_{\mathcal{A}_c}} \frac{1}{2}\|\bb{u}_{\mathcal{A}_c}-\bb{u}_{\mathcal{A}_c}^0\|^2 \label{eq.proposed_centralized_cbf_qp_objective}\\
    &\quad \quad \textrm{s.t.} \nonumber \\
    a &+\bb{b}\bb{u}_{\mathcal{A}_c} \geq 0, \label{eq.proposed_centralized_cbf_qp_constraints}
\end{align}
\end{subequations}
where $a = L_FH + \alpha_H(H)$ and $\bb{b} = L_GH_{[i \in \mathcal{A}_c]}$. If all agents are communicative, i.e. $\mathcal{A}_c = \mathcal{A}$, then since $H$ is a CBF for the set $C(\bb{k}(t)) \subset S$, for all $t \geq 0$, the system trajectories are guaranteed to stay within $C(\bb{k}(t)) \subset S$ and thus remain safe. In the presence of non-communicative agents, we replace the decentralized CBF-QP controller \eqref{eq.decentralized_cbf_qp_controller} with
\begin{subequations}\label{eq.proposed_decentralized}
\begin{align}
    \bb{u}_i^* &= \argmin_{\bb{u}_i \in \mathcal{U}_{i}} \frac{1}{2}\|\bb{u}_{i}-\bb{u}_{i}^0\|^2 \label{eq.proposed_decentralized_qp_objective}\\
    &\textrm{s.t.} \nonumber \\
    a &+ \bb{b}_i\bb{u}_i \geq d, \label{eq.proposed_decentralized_qp_constraints}
\end{align}
\end{subequations}
where $\bb{b}_i = L_GH_{[mi:m(i+1)]}$, i.e. the portion of the dynamics of $H$ that agent $i$ controls, and $d = e^{-rH}\max_{\bb{u} \in \mathcal{U}}\sum_{j=1,j \neq i}^AL_GH_{[jm:j(m+1)]}u_j$, where $r>0$. While for the case of unbounded control authority $d$ is similarly unbounded, in practice it is reasonable to assume that agents have limited control authority and thus to use \eqref{eq.proposed_decentralized} assuming some bounded $\mathcal{U}$.


\section{Multi-Robot Numerical Study}\label{sec.numerical_case_study}
In this section, we demonstrate our C-CBF controller on a decentralized multi-robot goal-reaching problem. 

Consider a collection of 3 non-communicative, but responsive robots ($i \in \mathcal{A}_n \setminus \mathcal{A}_{n,n}$) in a warehouse environment seeking to traverse a narrow corridor intersected by a passageway occupied with 6 non-responsive agents ($j \in \mathcal{A}_{n,n}$). The non-responsive agents may be e.g. humans walking or some other dynamic obstacles. Let $\mathcal{F}$ be an inertial frame with a point $s_0$ denoting its origin, and assume that each robot may be modeled according to a dynamic extension of the kinematic bicycle model described by \cite[Ch. 2]{Rajamani2012VDC}, provided here for completeness:
\begin{subequations}\label{eq.dynamic_bicycle_model}
\begin{align}
    \dot{x}_i &= v_{i}\left(\cos{\psi_i} - \sin{\psi_i}\tan{\beta_i}\right) \label{eq: dyn x} \\
    \dot{y}_i &= v_{i}\left(\sin{\psi_i} + \cos{\psi_i}\tan{\beta_i}\right) \label{eq: dyn y} \\
    \dot{\psi}_i &= \frac{v_{i}}{l_r}\tan{\beta_i} \label{eq: dyn psi} \\
    \dot{\beta}_i &= \omega_i \\
    \dot{v}_{i} &= a_{i},
\end{align}
\end{subequations}
where $x_i$ and $y_i$ denote the position (in m) of the center of gravity (c.g.) of the i$^{th}$ robot with respect to $s_0$, $\psi_i$ is the orientation (in rad) of its body-fixed frame, $\mathcal{B}_i$, with respect to $\mathcal{F}$, $\beta_i$ is the slip angle\footnote{$\beta_i$ is related to the steering angle $\delta_i$ via $\tan{\beta_i} = \frac{l_r}{l_r+l_f}\tan{\delta_i}$, where $l_f+l_r$ is the wheelbase with $l_f$ (resp. $l_r$) the distance from the c.g. to the center of the front (resp. rear) wheel.} (in rad) of the c.g. of the vehicle relative to $\mathcal{B}_i$ (assume $|\beta_i|<\frac{\pi}{2}$), and $v_{i}$ is the velocity of the rear wheel with respect to $\mathcal{F}$. The state of robot $i$ is denoted $\bb{z}_i = [x_i \; y_i \; \psi_i \; \beta_i \; v_{i}]^T$, and its control input is $\bb{u}_i=[a_{i} \; \omega_i]^T$, where $a_{i}$ is the acceleration of the rear wheel (in m/s$^2$), and $\omega_i$ is the angular velocity (in rad/s) of $\beta_i$.

The challenges of this scenario relate to preserving safety despite multiple non-communicative and non-responsive agents present in a constrained environment. A robot is safe if it 1) obeys the speed restriction, 2) remains inside the corridor area, and 3) avoids collisions with all other robots. Speed is addressed with the following candidate CBF:
\begin{align}
    h_{v}(\bb{z}_i) &= s_M - v_i, \label{eq.speed_cbf}
\end{align}
where $s_M > 0$, while for corridor safety and collision avoidance we used forms of the relaxed future-focused CBF introduced in \cite{Black2022ffcbf} for roadway intersections, namely
\begin{align}
\begin{split}\label{eq.corridor_cbf}
    h_{c}(\bb{z}_i) &= (m_L(x_i + \dot{x}_i) + b_L - (y_i + \dot{y}_i)) \cdot \\ &\quad \; (m_R(x_i + \dot{x}_i) + b_R - (y_i + \dot{y}_i)) 
\end{split}\\
\begin{split}\label{eq.rff_cbf}
    h_{r}(\bb{z}_i,\bb{z}_j) &= D(\bb{z}_i,\bb{z}_j,t + \hat\tau)^2 \\ & \quad + \epsilon D(\bb{z}_i,\bb{z}_j,t)^2 - (1 + \epsilon)(2R)^2, 
\end{split}   
\end{align}
where \eqref{eq.corridor_cbf} prevents collisions with the corridor walls (defined as lines in the $xy$-plane via $m_L,b_L,m_R,b_R \in \R$), and \eqref{eq.rff_cbf} prevents inter-robot collisions and is defined $\forall i \in \mathcal{A}_n \setminus \mathcal{A}_{n,n}$, $\forall j \in \mathcal{A}_n$, where $\epsilon>0$, $D(\bb{z}_i, \bb{z}_j, t_a)$ is the Euclidean distance between agents $i$ and $j$ at arbitrary time $t_a$, and $\hat\tau$ denotes the time in the interval $[0,T]$ at which the minimum inter-agent distance will occur under constant velocity future trajectories. For a more detailed discussion on future-focused CBFs, see \cite{Black2022ffcbf}.
As such, \eqref{eq.speed_cbf}, \eqref{eq.corridor_cbf}, and \eqref{eq.rff_cbf} define the sets 
\begin{align}
    S_{v,i} &= \{\bb{z}_i \in \R^n \; | \; h_{v}(\bb{z}_i) \geq 0\}, \nonumber \\
    S_{c,i} &= \{\bb{z}_i \in \R^n \; | \; h_{c}(\bb{z}_i) \geq 0\}, \nonumber \\
    S_{r,i} &= \bigcap\limits_{j=1,j\neq i}^A \{\bb{z} \in \R^N \; | \; h_{r}(\bb{z}_i, \bb{z}_j) \geq 0\}, \nonumber
\end{align}
the intersection of which constitutes the safe set for agents $i$, i.e. $S_i(t) = S_{v,i} \cap S_{c,i} \cap S_{r,i}$. 

We control robots $i \in \mathcal{A}_n \setminus \mathcal{A}_{n,n}$ using a C-CBF based decentralized controller of the form \eqref{eq.proposed_decentralized} with constituent functions $h_c$, $h_s$, $h_r$, an LQR based nominal control input (see \cite[Appendix 1]{Black2022ffcbf}), and initial gains $\bb{k}(0) = \mathbf{1}_{10 \times 1}$. The non-responsive agents used a similar LQR  controller to move through the passageway in pairs of two, with the first two pairs passing through the intersection without stopping and the last pair stopping at the intersection before proceeding.

As shown in Figure \ref{fig.warehouse_robot_trajectories}, the non-communicative robots traverse both the narrow corridor and the busy intersection to reach their goal locations safely. The trajectories of the gains $\bb{k}$ for each warehouse robot are shown in Figure \ref{fig.kgains_warehouse}, while their control inputs are depicted in Figure \ref{fig.warehouse_control_inputs}. The CBF time histories for the constituent and consolidated functions are highlighted in Figures \ref{fig.warehouse_cbfs} and \ref{fig.warehouse_ccbfs} respectively, and show that the C-CBF controllers maintained safety at all times. 



\begin{figure}[!ht]
    \centering
        \includegraphics[clip,width=0.9\linewidth]{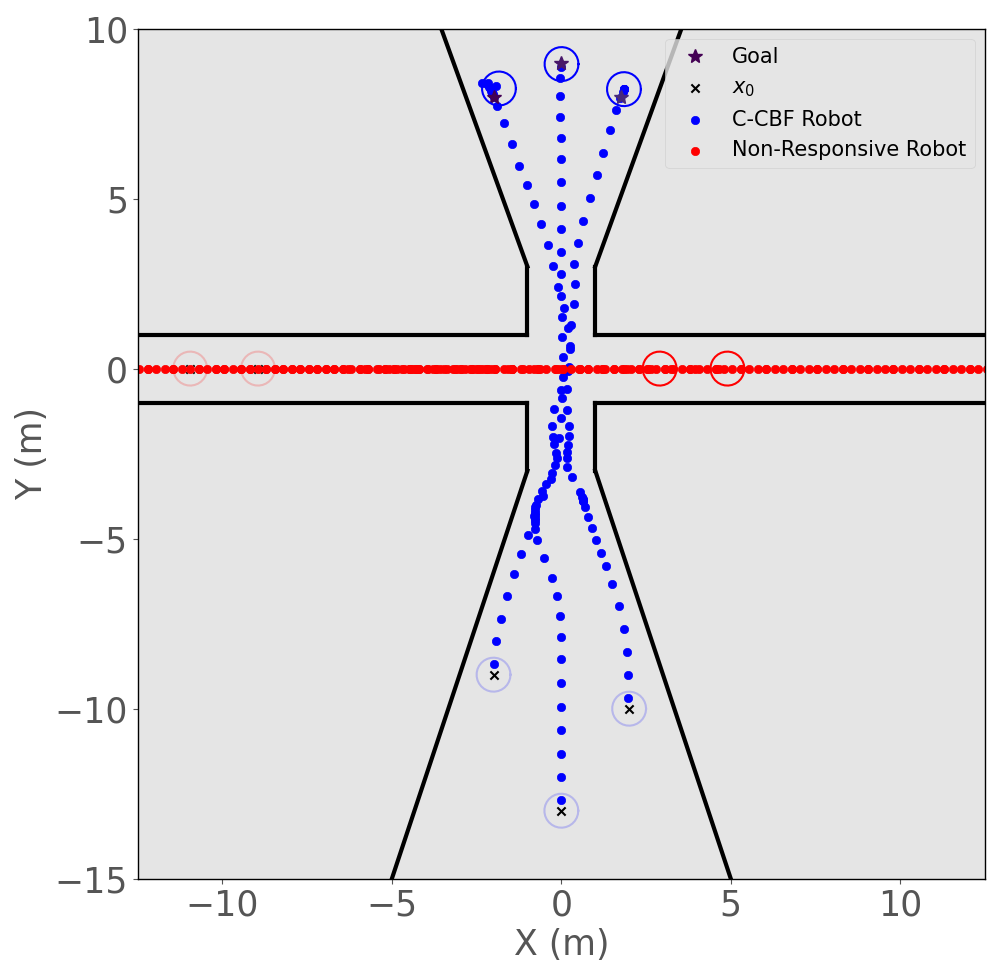}
    \caption{\small{XY paths for the warehouse robots (blue) and non-responsive agents (red) in the warehouse control problem.}}\label{fig.warehouse_robot_trajectories}
    \vspace{-3mm}
\end{figure}
\begin{figure}[!ht]
    \centering
        \includegraphics[clip,width=0.95\columnwidth]{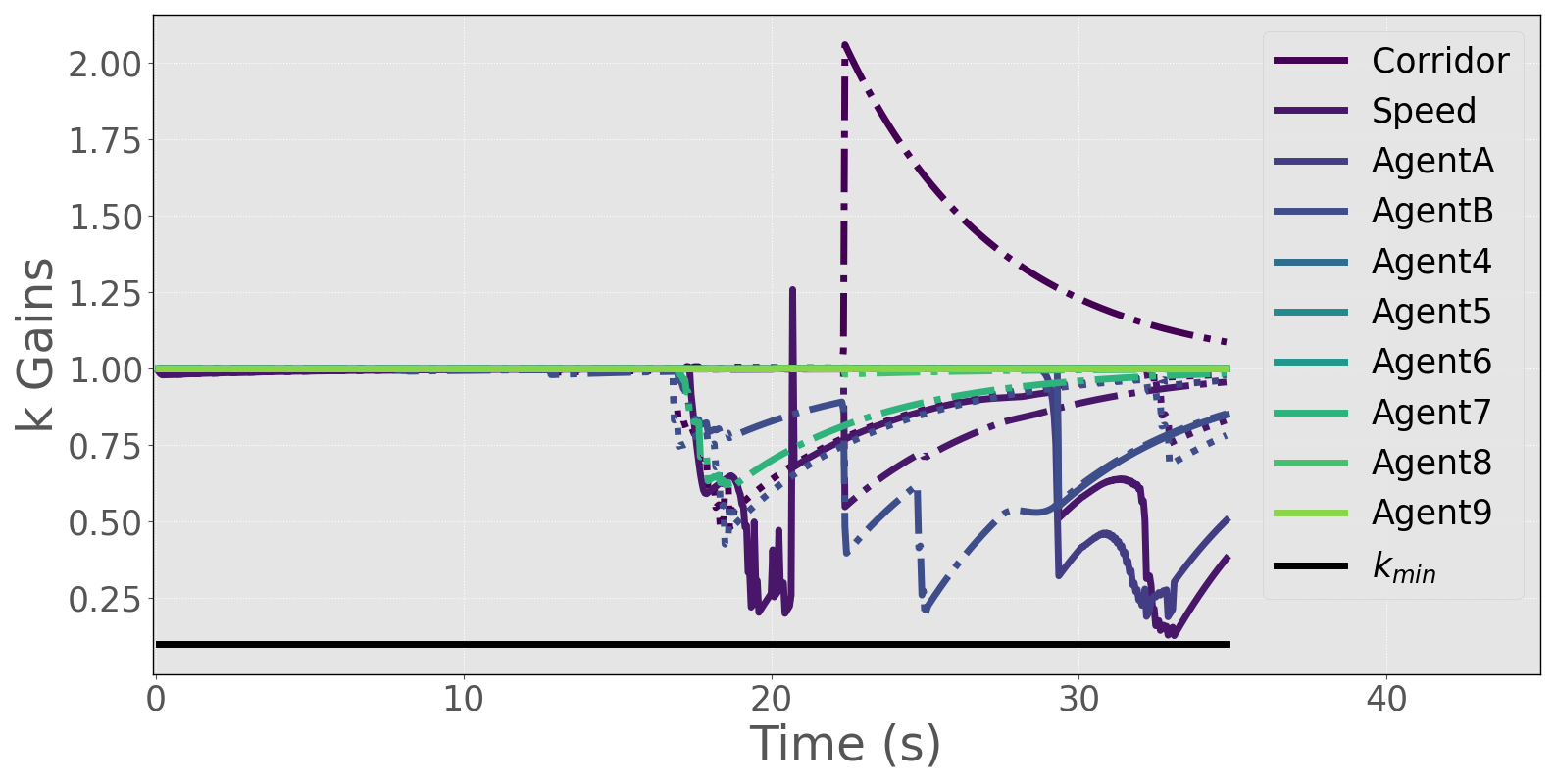}
    \caption{\small{Gains $\bb{k}$ for the C-CBF controllers in the warehouse study. Robot 1 denoted with solid lines, dotted for robot 2, dash-dots for robot 3. AgentA and AgentB denote the other two non-communicative robots from the perspective of one (e.g. AgentA=Agent1 and AgentB=Agent3 for robot 2). }}\label{fig.kgains_warehouse}
    \vspace{-3mm}
\end{figure}
\begin{figure}[!ht]
    \centering
        \includegraphics[clip,width=0.95\columnwidth]{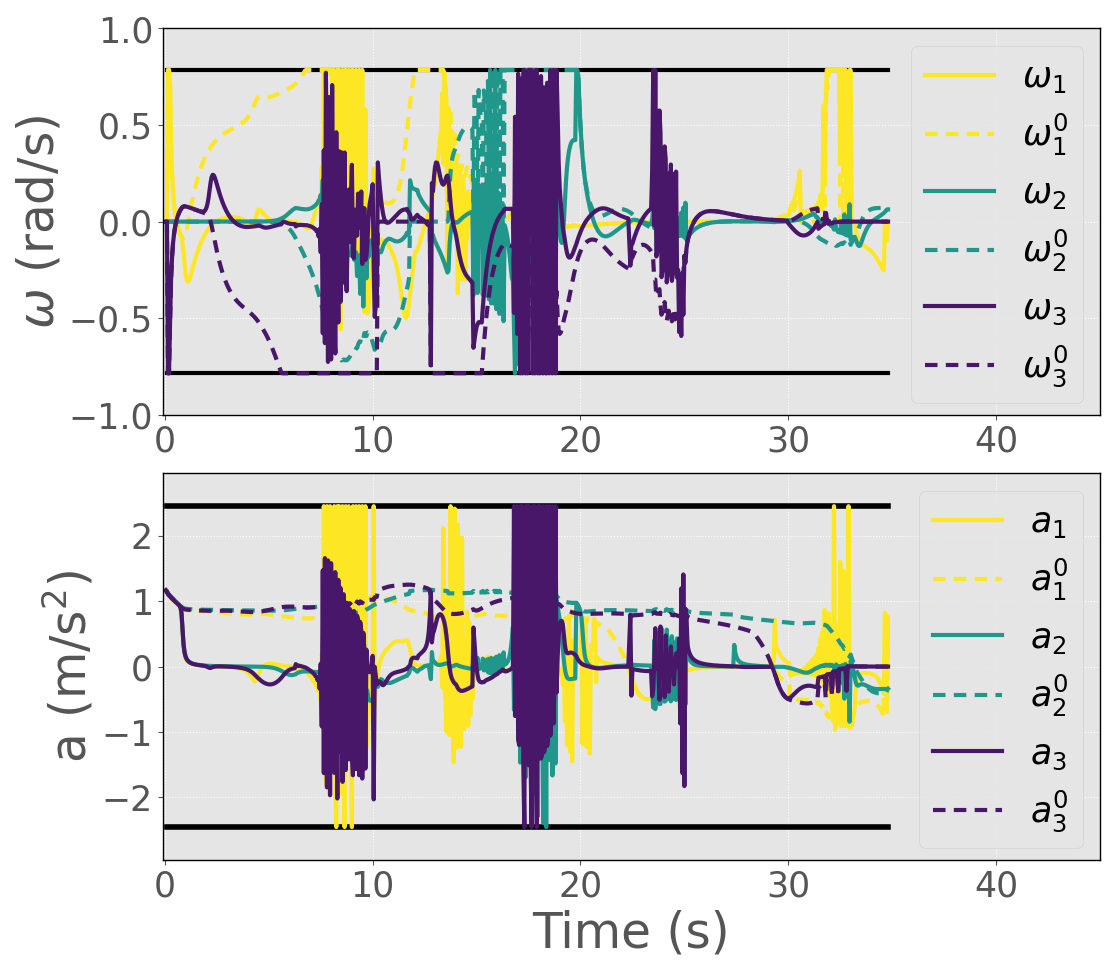}
    \caption{\small{Warehouse robot controls: accel. ($a$) and slip angle rate ($\omega$).}}\label{fig.warehouse_control_inputs}
    \vspace{-3mm}
\end{figure}
\begin{figure}[!ht]
    \centering
        \includegraphics[clip,width=0.95\columnwidth]{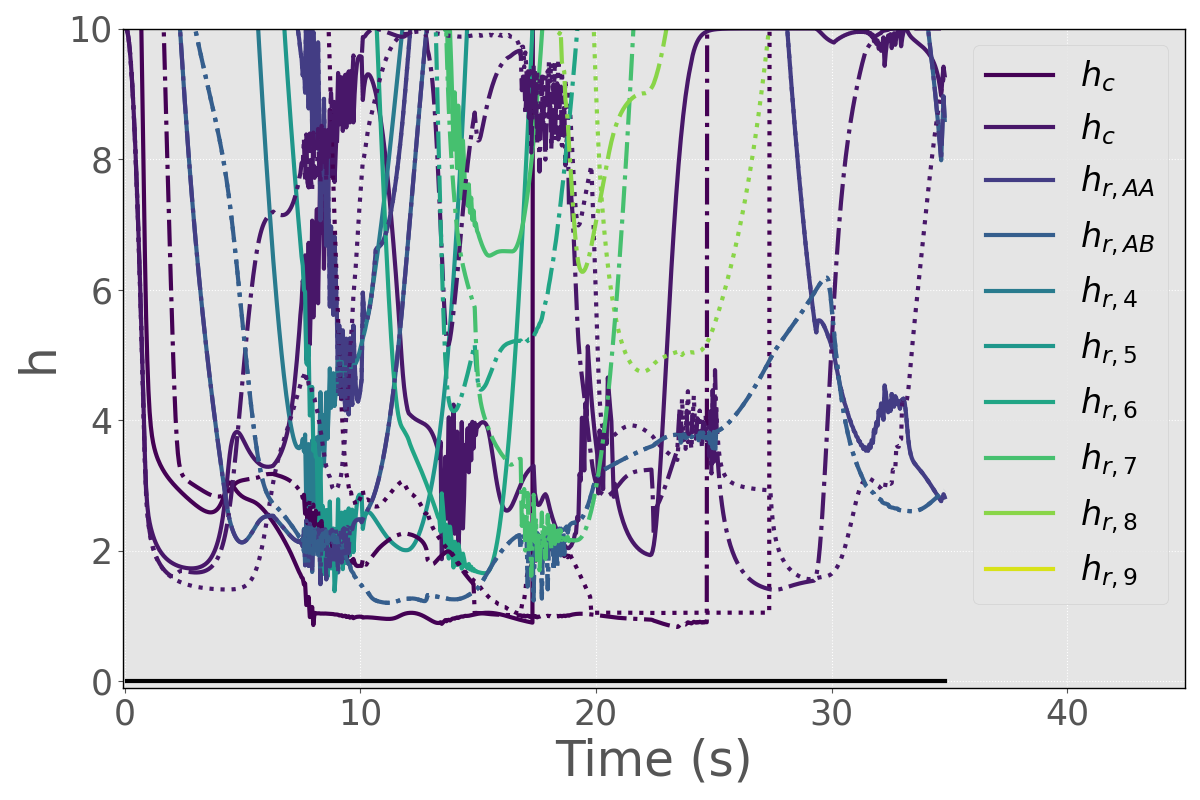}
    \caption{\small{Evolution of warehouse robot constituent CBF candidates, $h_s$ $\forall s \in [1..c]$, synthesized to construct C-CBF.}}\label{fig.warehouse_cbfs}
    \vspace{-3mm}
\end{figure}
\begin{figure}[!ht]
    \centering
        \includegraphics[clip,width=0.95\columnwidth]{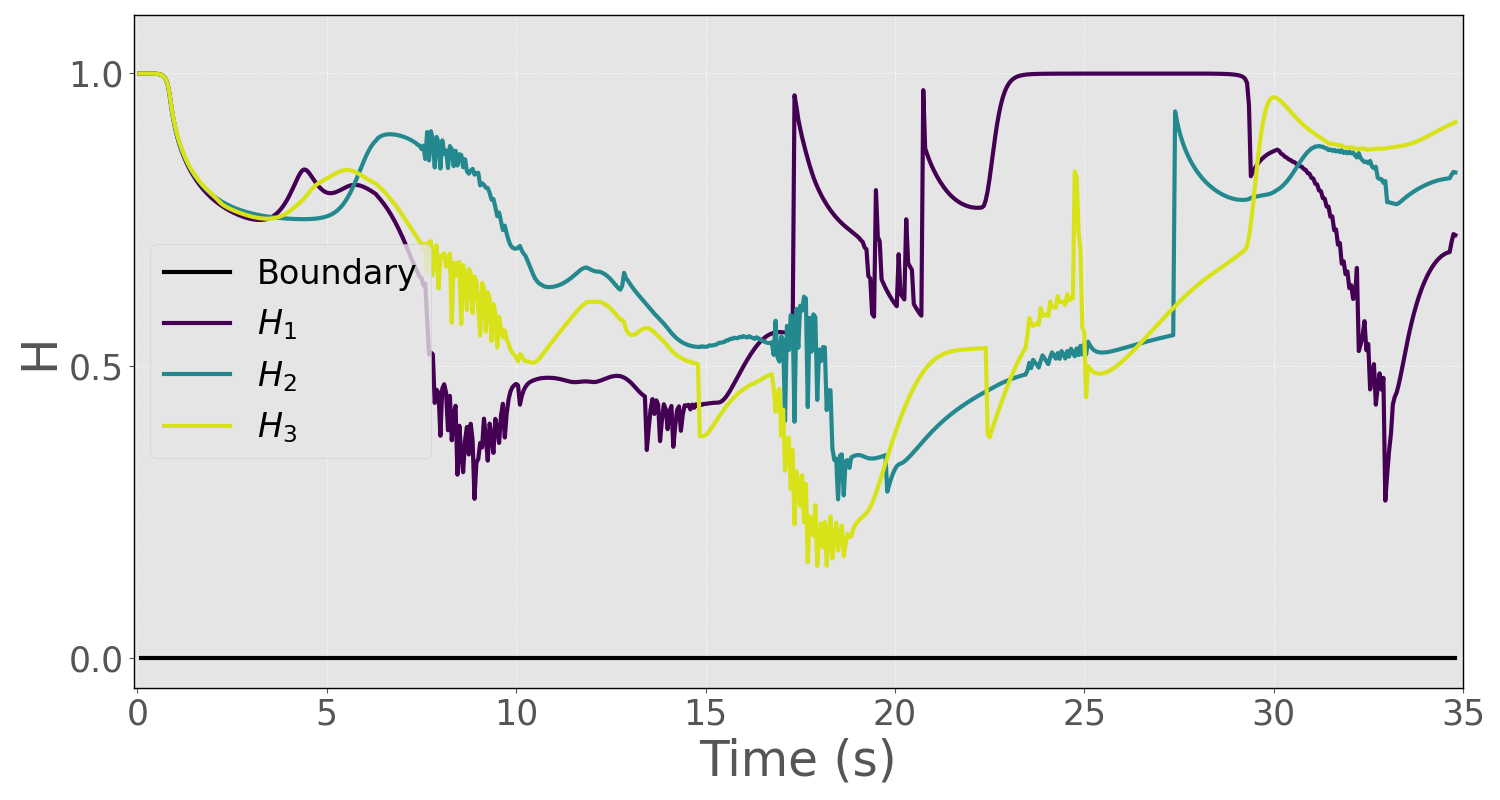}
    \caption{\small{Evolution of C-CBF $H$ for warehouse robots 1, 2, and 3.}}\label{fig.warehouse_ccbfs}
    \vspace{-1mm}
\end{figure}


\section{Experimental Case Study}\label{sec.experimental_case_study}
For experimental validation of our approach, we used an AION R1 UGV ground rover as an ego vehicle in the laboratory setting and required it to reach a goal location in the presence of two non-responsive rovers: one static and one dynamic. We modeled the rovers as bicycles using \eqref{eq.dynamic_bicycle_model}, and sent angular rate $\omega_i$ and velocity $v_i$ (numerically integrated based on the controller's acceleration output) commands to the rovers' on-board PID controllers. The ego rover used our proposed C-CBF \eqref{eq.proposed_decentralized} with constituent candidate CBFs \eqref{eq.speed_cbf} (with $s_M = 1$ m/s) and the rff-CBF defined in \eqref{eq.rff_cbf} for collision avoidance. The nominal input to the C-CBF controller was the LQR law from the warehouse robot example, as was the controller used by the dynamic non-responsive rover. A Vicon motion capture system was used for position feedback, and the state estimation was performed by extended Kalman filter via the on-board PX4.
\begin{figure}[!t]
    \centering
        \includegraphics[clip,width=\columnwidth]{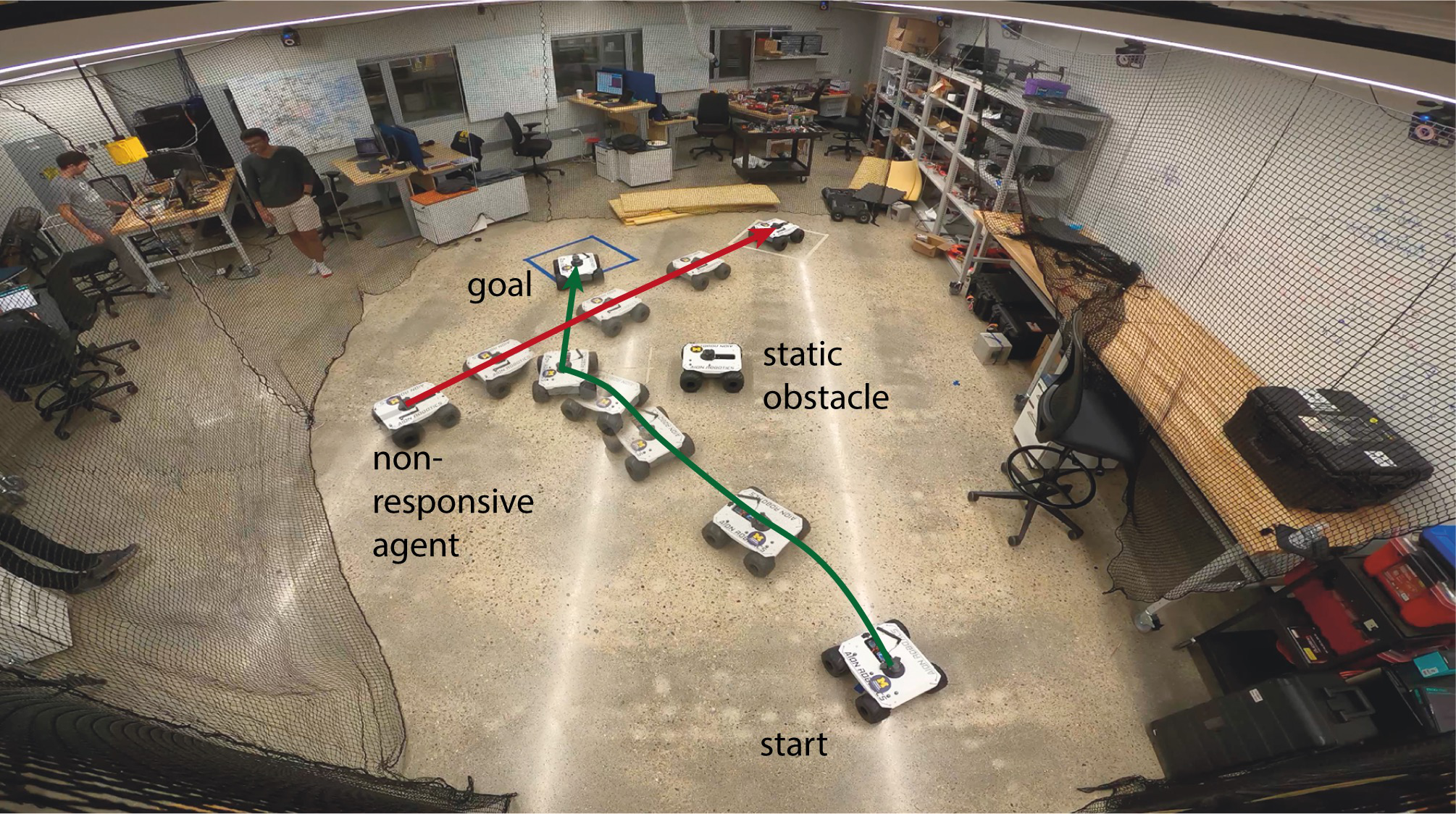}
    \caption{\small{A rover avoids a static and dynamic rover using our proposed C-CBF controller en route to a target in the laboratory setting.}}\label{fig.experiment}
\end{figure}

For the setup, the static rover was placed directly between the ego rover and its goal, while the dynamic rover was stationary until suddenly moving across the ego's path as it approached its target. As highlighted in Figure \ref{fig.experiment}, the ego rover first headed away from the static rover and then decelerated and swerved to avoid a collision with the second rover before correcting course and reaching its goal. Videos and code for both this experiment and the simulation in Section \ref{sec.numerical_case_study} is available on Github\footnote{Link to Github repo: \href{https://github.com/6lackmitchell/CCBF-Control}{github.com/6lackmitchell/CCBF-Control}}.


\section{Conclusion}\label{sec.conclusion}
In this paper, we addressed the problem of safe control under multiple state constraints via a C-CBF based control design. To ensure that the synthesized C-CBF is valid, we introduced a parameter adaptation law on the weights of the C-CBF constituent functions and proved that the resulting controller is safe. We then demonstrated the success of our approach on a multi-robot control problem in a crowded warehouse environment, and further validated our work on a ground rover experiment in the lab.

In the future, we plan to explore conditions under which the C-CBF approach may preserve guarantees in the presence of input constraints, including whether alternative adaptation laws for the weights assist in guarantees of liveness in addition to safety.

\bibliographystyle{IEEEtran}
\bibliography{root}


\appendices


\end{document}